\numberwithin{equation}{section}
\newtheorem{theorem}{Theorem}[section]
\newtheorem{proposition}[theorem]{Proposition}
\newtheorem{lemma}[theorem]{Lemma}
\newtheorem{remark}[theorem]{Remark}
\newcommand{\dist}{\mathop{\mathrm{dist}}\nolimits}
\newcommand{\ddc}{dd^c}
\newcommand{\dc}{d^c}
\def\mL{\mathcal{L}}
\newcommand{\dbar}{\overline\partial}
\newcommand{\C}{\mathbb{C}}
\newcommand{\D}{\mathbb{D}}
\newcommand{\R}{\mathbb{R}}
\renewcommand{\S}{\mathbb{S}}
\renewcommand{\H}{\mathbb{ H}}
\newcommand{\abs}[1]{\lvert#1\rvert}
\title{Harmonic currents directed by foliations by Riemann surfaces}
\author{Tien-Cuong Dinh and Hao Wu}
\address{Department of Mathematics, National University 
of Singapore, 10 Lower Kent Ridge Road, Singapore 119076.}
\email{matdtc@nus.edu.sg; e0011551@u.nus.edu}
\thanks{This work is  supported by NUS Tier 1  
Grants R-146-000-248-114 and R-146-000-319-114 from National University of
Singapore}
\date{}
\begin{document}

\begin{abstract}
We study local positive $\ddc$-closed currents directed by a foliation by Riemann surfaces near a hyperbolic singularity which have no mass on the separatrices. 
A theorem of Nguy\^en says that  the Lelong number of such a current at the singular point vanishes. We prove that this property is sharp: one cannot have any better mass estimate for this current near the singularity. 
\end{abstract}

\maketitle

\medskip

\noindent {\bf Classification AMS 2010}: 37F75, 37A.

\medskip

\noindent {\bf Keywords:} foliation, hyperbolic singularity, directed harmonic current, Lelong number.

\medskip


\section{Introduction}

In theory of foliations by Riemann surfaces, directed positive $\ddc$-closed currents play a central role like invariant measures for a dynamic system, see e.g.\ \cite{bernd-sibony,forn-sibony,forn-sibony-wold,garnett,nguyen2020singular,sullivan}. A fundamental problem is to understand such currents near the singularities of the foliations.

Let $\mathscr F$ be a foliation by Riemann surfaces near $0\in \C^n$ such that $0$ is an isolated hyperbolic singularity. Let $T$ be a positive $\ddc$-closed current of bi-dimension $(1,1)$ directed by $\mathscr F$. We assume that this current has no mass on the separatrices of the foliation at $0$. In \cite{Nguyen,nguyen2020singular},  Nguy\^en proves that the Lelong number of $T$ at $0$ vanishes.
Equivalently, the mass of $T$ in the polydisc $\delta \D^n:=\{(z_1,z_2,\cdots,z_n)\in \C^2:\,\abs{z_i}<\delta\}$ satisfies $$\|T\|_{\delta \D^n}=o(\delta^2)\quad\text{as}\quad \delta \to 0.$$
In this paper, we show that Nguy\^en's result is sharp: one cannot have a better estimate.

For simplicity, we consider the case of complex dimension $n=2$. It is not difficult to extend the result to the higher dimension case. Here is our main theorem.

\begin{theorem}\label{main}
Let $\mathscr F$ be a foliation by Riemann surfaces in a neighborhood of $0\in \C^2$. Assume that $0$ is a hyperbolic singularity.	 Let $\varepsilon:[0,1]\to\R^+$ be a continuous function such that $\varepsilon(0)=0$.
Then there exists a  positive $\ddc$-closed $(1,1)$-current $T$ in a neighborhood of $0\in \C^2$ directed by $\mathscr F$ having no mass on the separatrices at $0$ such that
$$\|T\|_{\delta\D^2}\geq \varepsilon(\delta) \delta^2.$$
\end{theorem}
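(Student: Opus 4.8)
The plan is to reduce to a linear model near the singularity and then to build $T$ as an infinite superposition of rescaled copies of a single, carefully chosen directed harmonic current.

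\emph{Reduction and basic features.} Since $0$ is a hyperbolic singularity, a generating vector field has two nonzero eigenvalues whose ratio $\lambda$ lies in $\C\setminus\R$; such a singularity is non-resonant and lies in the Poincar\'e domain, so by the Poincar\'e--Dulac theorem $\mathscr F$ is, in suitable holomorphic coordinates near $0$ and after rescaling, generated by $z_1\partial_{z_1}+\lambda z_2\partial_{z_2}$ on $\D^2$, with separatrices $\{z_1=0\}$ and $\{z_2=0\}$. Two elementary features of this model are used repeatedly: (i) every diagonal linear map $(z_1,z_2)\mapsto(az_1,bz_2)$ preserves $\mathscr F$ and its separatrices; in particular so does the homothety $\Phi_r(z)=rz$, which maps $r^{-1}\D^2$ biholomorphically onto $\D^2$; (ii) the class of positive $\ddc$-closed $(1,1)$-currents directed by $\mathscr F$ with no mass on the separatrices is stable under pushforward by biholomorphisms preserving $\mathscr F$ and its separatrices, and under increasing limits with locally bounded mass (the defining conditions pass to weak limits, and the trace measure of such a limit is the sum of the trace measures, hence still gives no mass to $\{z_1z_2=0\}$). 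Finally, transporting the current we construct back to the original coordinates by the inverse linearizing map changes $\|T\|_{\delta\D^2}$ only by a comparison of nested bidiscs, which is absorbed by replacing $\varepsilon$ by a fixed constant times $\varepsilon$ composed with a linear reparametrization of $\delta$ (still continuous, still vanishing at $0$).

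\emph{The seed current (the main obstacle).} The core step is to produce one positive $\ddc$-closed $(1,1)$-current $T_0$ on $\D^2$ directed by the linear model, with no mass on $\{z_1z_2=0\}$, whose mass decays as slowly as Nguy\^en's theorem permits: $g(\delta):=\delta^{-2}\|T_0\|_{\delta\D^2}$ is positive and slowly varying as $\delta\to0$ — concretely $\|T_0\|_{\delta\D^2}\asymp\delta^2/\abs{\log\delta}$. Such a current is extracted from the structure of directed harmonic currents near a linearized hyperbolic singularity (cf. \cite{Nguyen,nguyen2020singular}): in the logarithmic chart $w_j=\log z_j$ the foliation becomes the linear foliation by complex lines of slope $\lambda$ on a product of half-planes with a $2\pi i\Z^2$ deck action, and a directed harmonic current is described by a transverse measure together with leafwise positive harmonic weights subject to the holonomy equivariance; one chooses the weights so that they decay near the two separatrices just fast enough that the current extends across $\{z_1z_2=0\}$ with no mass there while keeping finite total mass, and a direct computation of the mass in $\delta\D^2$ yields the two-sided bound. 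I expect this construction and the accompanying mass estimate to be the main difficulty; everything below is soft.

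\emph{Rescaled building blocks and superposition.} Granting $T_0$, for $r\in(0,1)$ set $S_r:=(\Phi_{1/r})_*(T_0|_{r\D^2})$ and $\widehat S_r:=S_r/\|S_r\|_{\D^2}$. By (ii), $\widehat S_r$ is of the required type on $\D^2$, with $\|\widehat S_r\|_{\D^2}=1$ and, since $\Phi_{1/r}^{-1}(\delta\D^2)=r\delta\D^2$, $\|\widehat S_r\|_{\delta\D^2}=\|T_0\|_{r\delta\D^2}/\|T_0\|_{r\D^2}=\bigl(g(r\delta)/g(r)\bigr)\delta^2$ for $\delta\in(0,1]$. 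By slow variation of $g$, for every $\delta_0\in(0,1)$ there is $\rho(\delta_0)>0$ with $\|\widehat S_r\|_{\delta\D^2}\ge\tfrac12\delta^2$ for all $r\le\rho(\delta_0)$ and all $\delta\in[\delta_0,1]$. Now fix scales $1=\delta_0>\delta_1>\delta_2>\cdots\downarrow0$ and set $T_k:=\widehat S_{r_k}$ with $r_k\le\rho(\delta_k)$, so $\|T_k\|_{\D^2}=1$ and $\|T_k\|_{\delta\D^2}\ge\tfrac12\delta^2$ whenever $\delta\in[\delta_k,1]$. Put $M_k:=\max_{[\delta_{k+1},\delta_k]}\varepsilon$; continuity of $\varepsilon$ with $\varepsilon(0)=0$ gives $M_k\to0$, so $R_j:=2\sup_{i\ge j-1}M_i$ is a decreasing sequence with $R_1<\infty$, $R_j\to0$, and $R_j\ge2M_{j-1}$. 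Let $c_j:=R_j-R_{j+1}\ge0$ (hence $\sum_j c_j=R_1<\infty$) and $T:=\sum_{k\ge1}c_kT_k$; the partial sums increase with mass $\le R_1$ on $\D^2$, so by (ii) $T$ is a positive $\ddc$-closed $(1,1)$-current directed by $\mathscr F$ with no mass on the separatrices. Given $\delta\in(0,1]$, pick $k\ge0$ with $\delta\in[\delta_{k+1},\delta_k]$; then $\delta\ge\delta_j$ for all $j\ge k+1$, whence $\|T\|_{\delta\D^2}\ge\sum_{j\ge k+1}c_j\|T_j\|_{\delta\D^2}\ge\tfrac12\delta^2\sum_{j\ge k+1}c_j=\tfrac12\delta^2R_{k+1}\ge M_k\delta^2\ge\varepsilon(\delta)\delta^2$. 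Transporting $T$ back to the original coordinates — and having run the construction with the modified $\varepsilon$ from the first step — completes the proof.
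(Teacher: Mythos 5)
Your reduction to the linear model and your rescaling-and-superposition scheme are correct (the bookkeeping with $M_k$, $R_j$, $c_j$ checks out, modulo replacing the constant $\tfrac12$ by the ratio of the implied constants in $g\asymp 1/\abs{\log\delta}$, which is harmless), and this is a genuinely different and rather elegant reduction: it shows that the theorem follows from the existence of a \emph{single} directed current whose normalized mass $g(\delta)=\delta^{-2}\|T_0\|_{\delta\D^2}$ is slowly varying, rather than tailoring the current to $\varepsilon$. But the step you yourself flag as "the main obstacle" is a genuine gap, and it is not a small one: the existence of the seed current $T_0$ with the two-sided bound $\|T_0\|_{\delta\D^2}\asymp\delta^2/\abs{\log\delta}$ is asserted in one sentence ("one chooses the weights so that they decay \dots just fast enough") and is not available in the literature you cite -- Nguy\^en's results give the vanishing of the Lelong number and the disintegration over leaves, not a construction with prescribed slow decay. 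Your superposition genuinely needs the slow variation of $g$ (for a seed with, say, $g(\delta)=e^{-1/\delta}$ the ratio $\|T_0\|_{r\delta_0\D^2}/\|T_0\|_{r\D^2}$ tends to $0$ as $r\to0$ and the argument collapses), so the seed cannot be replaced by an arbitrary directed current.

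Constructing that seed is essentially the entire content of the paper. The paper takes a single leaf $L_1$, parametrized by a sector $\S\subset\H$, conformally maps $\S$ to $\H$ by $\zeta\mapsto\zeta^\gamma$, and takes $H$ to be the Poisson integral of an explicit boundary density $\widetilde H$ calibrated against $\varepsilon$; the current is $T=\pi_*(H[\S])$. Three things then require real work and would equally be required for your $T_0$: (i) finiteness of the mass, which rests on the Poisson-kernel estimate $\int_0^\infty V'/(V'^2+(U'-x')^2)\,dr\lesssim\abs{x'}^{-1+1/\gamma}$; (ii) the $\ddc$-closedness of $\pi_*(H[\S])$, which is \emph{not} automatic for a leafwise harmonic density -- one must kill the boundary terms in Stokes' formula along an exhaustion of the sector, and the paper does this using Harnack's inequality to control $dH$ near the edges; (iii) the mass lower bound at every scale, via a matching lower Poisson-kernel estimate. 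Indeed your seed requires slightly more than the paper proves for its current, namely a matching \emph{upper} bound for $\|T_0\|_{\delta\D^2}$ at every scale $\delta$, not just finiteness of the total mass. So the proposal correctly isolates the difficulty and supplies a clean soft reduction around it, but does not prove the theorem.
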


In particular, we do not have the general estimate $$\|T\|_{\delta\D^2}\lesssim |\log\delta|^{-\alpha} \delta^2$$ for some $\alpha>0$ in the local setting. This estimate is crucial in the study of global dynamics of  foliations via Poincar\'e metric and random walk on leaves, see \cite{Nguyen2} for details.

Let $F$ be a holomorphic vector field on a neighborhood of $0\in \C^2$ which defines the foliation $\mathscr F$. Recall that $0$ is a hyperbolic singularity of $\mathscr F$ if we can choose $F$ so that $$F=\eta z_1 {\partial\over \partial z_1} +z_2{\partial\over \partial z_2}+\text{higher order terms}$$ with $\eta=a+ib$ and $b\neq 0$. By Poincar\'e-Dulac theorem, see e.g.\ \cite{arn}, holomorphic vector fields are linearizable near hyperbolic singularities. We can change the local coordinate system $(z_1,z_2)$ so that
$$F= \eta z_1{\partial \over\partial z_1}+z_2{\partial\over \partial z_2}.$$ From now on, we use these new coordinates. Then the leaves of $\mathscr F$ can be described explicitly, see Section \ref{pre} below. The two axes $\{z_1=0\}$ and $\{z_2=0\}$ are called the separatrices of $\mathscr F$ at $0$.

We will construct an explicit  positive $\ddc$-closed current $T$ having full mass on a leaf $\mL$ of $\mathscr F$. It is given by a positive harmonic function  on $\mL$.

In Section \ref{pre}, we will recall the standard parametrization of the leaves of $\mathscr F$ together with some properties that we need in this paper. We will also give the construction of the current $T$. In Section  \ref{proof}, we will show that $T$  satisfies our main theorem.

Throughout this paper, the symbols $\lesssim$ and $\gtrsim$ stand for inequalities up to a multiplicative
constant. We write $\simeq$ if both hold. We denote by $\H,\D,\delta\D,\delta\D^2$ the upper half-plane in $\C$, the unit disk, the disk of cent $0$ and radius $\delta$, and the bidisc $\delta\D\times\delta\D$ respectively.

\medskip
\section{Construction of harmonic current and some estimates}\label{pre}

We will use the notation as in \cite[Sections 4 and 5]{DS}. For simplicity, we also assume that $b>0$ (if $b<0$ we use the change $(z_1,z_2)\mapsto(z_2,z_1)$ and $F\mapsto \eta^{-1} F$ in order to reduce to the case $b>0$). Define the annulus $\mathbb A$ by $$\mathbb A:=\{\alpha\in\C:\,\,e^{-2\pi b}<\abs\alpha\leq 1\}$$ and  the sector $\S$ in the upper half-plane $\H$ by
$$\S:=\{\zeta=u+iv:\,\, v>0,bu+av>0\}.$$  For $\alpha\in \C^*$, consider the Riemann surface $\mL_\alpha$ immersed in $\C^2$ defined by $$z_1=\alpha e^{i\eta(\zeta+\log\abs\alpha/b)}\quad\text{and}\quad z_2=e^{i(\zeta+\log\abs\alpha/b)}\quad\text{with}\quad \zeta=u+iv.$$ The map $\zeta\mapsto (z_1,z_2)$ is injective because $\eta\notin \R$. It is easy to check that $\mL_\alpha$ is tangent to the vector field $F$. Hence $\mL_\alpha$ is a parametrization of a leaf of $\mathscr F$ in $\C^2$. 

For $\alpha_1,\alpha_2\in \mathbb A$, one can check that $\mL_{\alpha_1}$ and $\mL_{\alpha_2}$ are disjoint if $\alpha_1\neq \alpha_2$. The union of all $\mL_\alpha,\alpha\in \mathbb A$, is equal to $(\C^*)^2$. The intersection $L_\alpha:=\mL_\alpha \cap \D^2$ is given by the same equations as $\mL_\alpha$ but with $\zeta\in \S$. Since $L_\alpha$ is a connected submanifold of $(\D^*)^2$, it is a leaf of $\mathscr F\cap \D^2$.

Fix an $\alpha\in\mathbb A$,  and denote   by $\pi:\S\to L_\alpha$ the above parametrization of $L_\alpha$. From now on, we  take $\alpha=1$ for simplicity. In this case, we have 
\begin{equation}\label{2.1}
	\pi(\zeta):=(e^{i\eta\zeta}, e^{i\zeta})\quad\text{with}\quad \zeta=u+iv\in \S.
	\end{equation} 

 Consider the biholomorphic map $\Phi:\S \to \H$ defined by 
$$\Phi(\zeta):=\zeta^\gamma \quad \text{with}\quad \gamma>1 \quad\text{and}\quad \tan{\pi\over\gamma}=-{b\over a}.$$ We use the coordinate $Z=U+iV:=\Phi(\zeta)$ on $\H$.

We can replace the function $\varepsilon$ in the main theorem by a suitable larger function which is smooth on $(0,1]$, strictly increasing and concave. Fix a large constant $A>0$ and define the function $\widetilde H(x)$ on $\R$ by
$$\widetilde H(\pm t^\gamma):=\gamma^{-1}Ae^{-t}\varepsilon'(e^{-t}) \quad\text{for}\quad t\geq0.$$ We can easily check that
\begin{equation}\label{2.2}
\int_{x\leq -t^\gamma}\widetilde H(x) (-x)^{-1+1/\gamma} dx=\int_{x\geq t^\gamma}\widetilde H(x) x^{-1+1/\gamma} dx =A \varepsilon(e^{-t}).
\end{equation}
Then we extend $\widetilde H$ to a positive harmonic function, still denoted by $\widetilde H$, on the upper half-plane $\H$ by using Poisson kernel, i.e.
$$\widetilde H(U+iV):=\frac{1}{\pi}\int_\R \widetilde H(x) {V\over V^2+(U-x)^2} dx \quad\text{for}\quad U+iV\in \H.$$ Since $\widetilde H(x)=\widetilde H(-x)$, the function $\widetilde H$ is symmetric: $\widetilde H(U+iV)=\widetilde H(-U+iV)$ for $U+iV\in\H$.
 Define the function $H:=\widetilde H\circ \Phi$ on $\S$.

Consider the following $(1,1)$-current on $\D^2$,
$$T:=\pi_*(H[\S]).$$
In the rest of this paper, we will show that $T$ is a current that satisfies Theorem \ref{main}.

\vskip 5pt
As in \cite{DS},  to simplify the computation, we will introduce some new variables. Firstly,  let $\zeta^*:=u^*+i=-a/b+i$ be the intersection point of $\{v=1\}$ with the line $\{bu+av=0\}$. Define $$\rho:=|\zeta^*|^\gamma=-\Phi(\zeta^*).$$ We  consider $\zeta=u+iv$ in  the half-line $\{v=s\}\cap\S$  and the new variable $$r:=u/s-u^*.$$ Note that $rs$ is  comparable with the distance of $\zeta=u+iv$ to the edge $\{bu+av=0\}$ of $\S$ and we have $bu+av=bsr$ and $du=sdr$ on the half-line $\{v=s\}\cap\S$.   Define also the variables $$Z':=s^{-\gamma}Z,\quad U':=s^{-\gamma}U,\quad V':=s^{-\gamma}V$$ for $U+iV=\Phi(\zeta)\in\H$. Notice that $\zeta=s(\zeta^*+r)$ and hence $Z'=U'+iV'=\Phi(\zeta^*+r)$. Finally, we write $$x':=s^{-\gamma}x\quad\text{for}\quad x\in\R.$$ We need the following three lemmas for the new variables. See \cite[Section 5]{DS} for the proofs.

\begin{lemma}\label{UV1}
	When $r\to 0$, we have $U'=-\rho+O(r)$ and $V'=\beta r+O(r^2)$ for some constant $\beta>0$. Moreover, given a constant $N>0$, we have for $0\leq r\leq N$,
	$$ \dist(x',U'+iV')^2\geq c_N\big[ r^2+\dist(x',-\rho)^2\big],$$ where $\dist$ denotes the standard distance and $c_N>0$ is a constant independent of $x'$.
\end{lemma}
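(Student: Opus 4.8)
The plan is to exploit that $Z'=\Phi(\zeta^*+r)=(\zeta^*+r)^\gamma$ is a holomorphic, hence real-analytic, function of $r$ on a neighbourhood of $[0,\infty)$: the ray $r\mapsto \zeta^*+r=(r-a/b)+i$ lies in the open upper half-plane, and for $r>0$ even in $\S$ (one checks $b(u^*+r)+a=br>0$), so a fixed branch of $\zeta\mapsto\zeta^\gamma$ is available along it. First I would identify $\arg\zeta^*$. Since $\Phi$ maps $\S$ biholomorphically onto $\H$, multiplying angles at the vertex $0$ by $\gamma$, the sector $\S$ has opening $\pi/\gamma$; as one of its edges is the positive real axis, the other edge $\{bu+av=0,\ v>0\}$, which contains $\zeta^*$, has argument $\pi/\gamma$. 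Hence $(\zeta^*)^\gamma=|\zeta^*|^\gamma e^{i\pi}=-\rho$, which reproves $\Phi(\zeta^*)=-\rho$ and gives $U'(0)=-\rho$, $V'(0)=0$; Taylor expanding at $r=0$,
\[
U'+iV'=Z'(r)=-\rho+\gamma(\zeta^*)^{\gamma-1}\,r+O(r^2).
\]
With $\beta:=\Im\big(\gamma(\zeta^*)^{\gamma-1}\big)=\gamma|\zeta^*|^{\gamma-1}\sin\big((\gamma-1)\pi/\gamma\big)=\gamma|\zeta^*|^{\gamma-1}\sin(\pi/\gamma)>0$ (using $\arg(\zeta^*)^{\gamma-1}=(\gamma-1)\pi/\gamma\in(0,\pi)$ since $\gamma>1$), this yields $U'=-\rho+O(r)$ and $V'=\beta r+O(r^2)$ as $r\to 0$, the first assertion.

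For the estimate on $[0,N]$ I would record two consequences. First, there is $c>0$ with $V'(r)\ge c\,r$ for $0\le r\le N$: the expansion gives $V'(r)\ge\frac{\beta}{2}r$ on some interval $[0,r_0]$, while on $[r_0,N]$ the function $V'$ is continuous and strictly positive (for $r>0$, $\zeta^*+r\in\S$ forces $Z'(r)\in\H$), hence bounded below by a positive constant, which dominates $r/N$. Secondly, by the same splitting, $|U'(r)+\rho|\le C\,r$ on $[0,N]$ for some $C>0$, since $U'(r)+\rho=O(r)$ near $0$ and $U'+\rho$ is bounded on $[r_0,N]$. Now fix $x'\in\R$ and $r\in[0,N]$, put $d:=x'+\rho$ (so $|d|=\dist(x',-\rho)$) and $w:=U'(r)+\rho$, so that $\dist(x',U'+iV')^2=(d-w)^2+V'(r)^2$. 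If $|w|\le|d|/2$ then $(d-w)^2\ge d^2/4$; if $|w|>|d|/2$ then $d^2<4w^2\le 4C^2r^2\le 4C^2c^{-2}V'(r)^2$, by the two facts just recorded. In either case $(d-w)^2+V'(r)^2\ge c_N'\,d^2$ for a constant $c_N'>0$ independent of $x'$; adding this to $(d-w)^2+V'(r)^2\ge V'(r)^2\ge c^2r^2$ gives $\dist(x',U'+iV')^2\ge c_N\big[r^2+\dist(x',-\rho)^2\big]$ with $c_N>0$ depending only on $N$, as required.

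The argument is essentially a computation, and I expect no genuine obstacle; the two points needing a little care are the strict positivity of $\beta$, obtained by pinning down $\arg\zeta^*=\pi/\gamma$ (equivalently, the ray $r\mapsto\zeta^*+r$ enters $\S$ transversally to its edge because $\gamma>1$), and the uniformity of the implicit constants — in $r$ over $[0,N]$, handled by compactness together with the Taylor expansions near $r=0$, and in $x'$ over all of $\R$, handled by the case distinction on whether $|w|\le|d|/2$. Since $\Phi$ is a biholomorphism onto $\H$, $Z'(r)$ never degenerates nor leaves $\overline\H$, so nothing else intervenes.
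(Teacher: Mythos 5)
Your proof is correct, and it is essentially the argument the paper relies on (the paper itself defers to \cite[Section 5]{DS}, where the lemma is proved by the same Taylor expansion of $(\zeta^*+r)^\gamma$ at the edge point $\zeta^*$ with $\arg\zeta^*=\pi/\gamma$, followed by a compactness/case analysis to get the uniform lower bound). The two delicate points — strict positivity of $\beta=\gamma|\zeta^*|^{\gamma-1}\sin(\pi/\gamma)$ and the uniformity in $x'$ via the dichotomy $|U'+\rho|\le |x'+\rho|/2$ versus its negation — are handled correctly.
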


\begin{lemma}\label{UV2}
	 When $r\to \infty$, we have $U'=r^\gamma+O(r^{\gamma-1})$ and  $V'=\gamma r^{\gamma -1}+O(r^{\gamma-2}).$
\end{lemma}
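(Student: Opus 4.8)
\smallskip
\noindent\textbf{Proof proposal for Lemma~\ref{UV2}.} The plan is to reduce the statement to an elementary first-order expansion of the holomorphic power $\zeta\mapsto\zeta^\gamma$ at infinity. Recall from the discussion preceding the lemma that, for $\zeta=u+iv$ on the half-line $\{v=s\}\cap\S$, one has $\zeta=s(\zeta^*+r)$ and hence
$$Z'=U'+iV'=s^{-\gamma}\Phi(\zeta)=\Phi(\zeta^*+r)=(\zeta^*+r)^\gamma,$$
where $\zeta^*=u^*+i$ with $u^*=-a/b$. Since $\Im(\zeta^*+r)=1>0$ and $\Re(\zeta^*+r)=r+u^*\to+\infty$ as $r\to\infty$, the point $\zeta^*+r$ eventually lies in the region on which $\Phi$ is given by the principal branch of $\zeta^\gamma$, and $\arg(\zeta^*+r)\to0^+$. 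Thus the lemma becomes a purely one-variable statement about $(\zeta^*+r)^\gamma$ as $r\to\infty$.

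Next I would write $\zeta^*+r=r\bigl(1+\zeta^*/r\bigr)$ and apply the binomial expansion $(1+w)^\gamma=1+\gamma w+O(|w|^2)$, valid and uniform for $|w|\le\frac12$ with an implied constant depending only on $\gamma$; here $w=\zeta^*/r=O(1/r)$. This gives
$$(\zeta^*+r)^\gamma=r^\gamma\bigl(1+\gamma\zeta^*/r+O(r^{-2})\bigr)=r^\gamma+\gamma\,\zeta^*\,r^{\gamma-1}+O(r^{\gamma-2}).$$
Taking real and imaginary parts and using $\Re\zeta^*=u^*$, $\Im\zeta^*=1$, we obtain $U'=r^\gamma+\gamma u^*r^{\gamma-1}+O(r^{\gamma-2})$, hence $U'=r^\gamma+O(r^{\gamma-1})$, together with $V'=\gamma r^{\gamma-1}+O(r^{\gamma-2})$, which is exactly the claim. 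Alternatively one may expand $\operatorname{Log}(\zeta^*+r)=\log r+\zeta^*/r+O(r^{-2})$ and exponentiate; this yields the same conclusion.

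Because $\gamma>1$ and $\zeta^*$ are fixed, there is no real structural difficulty here: the estimate is just a Taylor expansion of a power at infinity. The one point deserving care in a careful write-up — and the step I would check most closely — is ensuring the error term is genuinely of order $r^{\gamma-2}$, not merely $o(r^{\gamma-1})$; this is supplied by the quadratic remainder in the binomial expansion together with its uniformity on $|w|\le\frac12$, so one only needs to record that the implied constants depend solely on $\gamma$ (and on the fixed quantity $|\zeta^*|$). Confirming that $\zeta^*+r$ stays in the principal-branch domain of $\Phi$ for all large $r$, already observed above, completes the short list of things to verify.
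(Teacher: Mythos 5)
Your argument is correct: the identity $Z'=\Phi(\zeta^*+r)=(\zeta^*+r)^\gamma$ is exactly the one recorded in the paper, and the binomial expansion $(1+\zeta^*/r)^\gamma=1+\gamma\zeta^*/r+O(r^{-2})$ with $\Im\zeta^*=1$ immediately yields both asymptotics, including the $O(r^{\gamma-2})$ error for $V'$. The paper itself defers the proof to \cite[Section 5]{DS}, where the same elementary Taylor expansion at infinity is used, so your route is essentially the standard one.
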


\begin{lemma}\label{U'V'}
	There is a constant $c>0$ such that $$\int_{0}^{\infty}{V'\over V'^2+(U'-x')^2} dr\leq c |x'|^{-1+1/\gamma} \quad\text{for}\quad|x'|\geq 2\rho.$$
\end{lemma}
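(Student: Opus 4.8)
The plan is to make the curve $r\mapsto Z'(r)=\Phi(\zeta^*+r)$ completely explicit and then to estimate the integral by localizing around the boundary point $x'$. Writing $\zeta^*=u^*+i$ with $u^*=-a/b$, and noting $\Im(\zeta^*+r)=1$, I would introduce the angular variable $\theta:=\arg(\zeta^*+r)\in(0,\pi/\gamma]$; then $u^*+r=\cot\theta$, so $r=\cot\theta-u^*$, $dr=-\csc^2\theta\,d\theta$, and $|\zeta^*+r|=\csc\theta$. Since $\cot(\arg\zeta^*)=u^*=-a/b=\cot(\pi/\gamma)$, one has $\arg\zeta^*=\pi/\gamma$, so $\zeta^*+r$ lies in the sector $\S=\{0<\arg\zeta<\pi/\gamma\}$ and $\Phi(\zeta)=\zeta^\gamma$ gives
$$Z'(r)=\mu\,e^{i\gamma\theta},\qquad U'=\mu\cos\gamma\theta,\quad V'=\mu\sin\gamma\theta,\qquad\mu:=\csc^\gamma\theta\ \ (\ge1),$$
and, as $\theta$ decreases from $\pi/\gamma$ to $0$ while $r$ runs over $[0,\infty)$,
$$\int_0^\infty\frac{V'}{V'^2+(U'-x')^2}\,dr=\int_0^{\pi/\gamma}\frac{\mu\sin\gamma\theta}{\mu^2-2x'\mu\cos\gamma\theta+x'^2}\,\csc^2\theta\,d\theta.$$
Since the curve $Z'([0,\infty))$ meets $\R$ only at $Z'(0)=-\rho$ (so $V'>0$ for $r>0$) and $U'(r)\to+\infty$ makes the integrand $O(r^{-\gamma-1})$ near $r=\infty$, the left side is finite and depends continuously on $x'$; hence it is bounded on compact subsets of $\{|x'|\ge 2\rho\}$, and it suffices to treat $|x'|\ge K$ for a large constant $K=K(\gamma)$.

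Next I would split $(0,\pi/\gamma)$ into the bounded range $\Theta_A:=[\pi/(4\gamma),\pi/\gamma)$ and the small-angle range $\Theta_B:=(0,\pi/(4\gamma))$. On $\Theta_A$ one has $\csc\theta\le M(\gamma)$, hence $|Z'(r)|\le C_0(\gamma)$; choosing $K\ge 2C_0(\gamma)$ forces $|U'-x'|\ge|x'|/2$, so $\frac{V'}{V'^2+(U'-x')^2}\le 4C_0(\gamma)|x'|^{-2}$, and since $\int_{\Theta_A}\csc^2\theta\,d\theta$ is a finite constant, the $\Theta_A$-contribution is $\lesssim|x'|^{-2}\le|x'|^{-1+1/\gamma}$. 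On $\Theta_B$ one has $\gamma\theta<\pi/4$, so $\cos\gamma\theta\in(1/\sqrt2,1)$, $\sin\gamma\theta\simeq\gamma\theta$, $\csc\theta\simeq\theta^{-1}$, which yields the comparisons $\mu\simeq\theta^{-\gamma}$, $U'\simeq\mu$, $V'\simeq\mu^{1-1/\gamma}$, valid uniformly on $\Theta_B$. Reparametrizing by $\mu$ (which increases from $\mu_0:=\csc^\gamma(\pi/4\gamma)$ to $\infty$, with $d\mu=-\gamma\mu\cot\theta\,d\theta$ and $\csc^2\theta=\mu^{2/\gamma}$) gives $dr\simeq\mu^{-1+1/\gamma}\,d\mu$; moreover $|U'-\mu|=\mu(1-\cos\gamma\theta)\simeq\mu(\gamma\theta)^2\simeq\mu^{1-2/\gamma}\le\mu^{1-1/\gamma}\simeq V'$ (using $\mu\ge 1$), so $|U'-\mu|\le c(\gamma)V'$. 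A case distinction according as $|\mu-x'|\ge 2c(\gamma)V'$ or not then gives $V'^2+(U'-x')^2\gtrsim V'^2+(\mu-x')^2\gtrsim\mu^{2-2/\gamma}+(\mu-x')^2$. Altogether the $\Theta_B$-contribution is
$$\lesssim\ \int_{\mu_0}^\infty\frac{d\mu}{\mu^{2-2/\gamma}+(\mu-x')^2}\ =:\ J(x').$$

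To finish I would estimate $J(x')$ elementarily. If $x'\le-K$, then $(\mu-x')^2\ge(\mu+|x'|)^2$ for $\mu\ge\mu_0$ gives $J(x')\le(\mu_0+|x'|)^{-1}\le|x'|^{-1}\le|x'|^{-1+1/\gamma}$. If $x'\ge K$ (so $x'\ge 2\mu_0$), decompose $[\mu_0,\infty)=[\mu_0,x'/2]\cup[x'/2,2x']\cup[2x',\infty)$: on the first and third intervals $(\mu-x')^2$ dominates ($\ge x'^2/4$, resp.\ $\ge\mu^2/4$), each piece being $\lesssim x'^{-1}$; on the middle interval $\mu^{2-2/\gamma}\ge(x'/2)^{2-2/\gamma}$ (here $2-2/\gamma>0$ since $\gamma>1$), so by $\int_{\R}(a^2+(\mu-x')^2)^{-1}d\mu=\pi/a$ with $a=(x'/2)^{1-1/\gamma}$ that piece is $\le\pi(x'/2)^{-1+1/\gamma}\lesssim x'^{-1+1/\gamma}$. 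Hence $J(x')\lesssim|x'|^{-1+1/\gamma}$, and together with the $\Theta_A$-bound and the reduction for $2\rho\le|x'|\le K$ this completes the proof.

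I expect the main obstacle to be the analysis on $\Theta_B$: one must upgrade the asymptotic descriptions of $U'$ and $V'$ (of the kind recorded in Lemmas \ref{UV1} and \ref{UV2}) to two-sided comparisons holding uniformly over the entire small-angle range, and — the crucial point — check that the real part $U'$ stays within $O(V')$ of the modulus $\mu=|Z'|$. This last fact is exactly what allows $(U'-x')^2$ to be replaced by $(\mu-x')^2$, thereby reducing the estimate to the one-variable integral $J(x')$, whose middle piece $\int_{x'/2}^{2x'}(a^2+(\mu-x')^2)^{-1}d\mu$ produces the exponent $-1+1/\gamma$.
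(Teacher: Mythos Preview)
Your argument is correct. The paper does not actually prove this lemma; it simply refers the reader to \cite[Section~5]{DS}, so there is no in-paper proof to compare line by line. What one can say is that the surrounding Lemmas~\ref{UV1} and~\ref{UV2} (also imported from \cite{DS}) record the asymptotics of $U',V'$ in the variable $r$ near $r=0$ and $r=\infty$, and the proof in \cite{DS} proceeds by splitting the $r$-integral into ranges where those asymptotics apply. Your route is a genuine variant: by passing to polar data $\theta=\arg(\zeta^*+r)$ and $\mu=|\zeta^*+r|^\gamma=|Z'|$ you obtain exact formulas $U'=\mu\cos\gamma\theta$, $V'=\mu\sin\gamma\theta$, which immediately yield the key inequality $|U'-\mu|=\mu(1-\cos\gamma\theta)=V'\tan(\gamma\theta/2)\le V'$ on $\Theta_B$ (no asymptotic expansion needed), and then a clean one-variable bound $J(x')=\int_{\mu_0}^\infty\big(\mu^{2-2/\gamma}+(\mu-x')^2\big)^{-1}d\mu\lesssim|x'|^{-1+1/\gamma}$. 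This is slightly more self-contained than invoking Lemmas~\ref{UV1}--\ref{UV2}, and it makes transparent why the exponent $-1+1/\gamma$ appears: it is exactly $\pi/a$ with $a=V'(x')\simeq x'^{1-1/\gamma}$ on the ``resonant'' window $\mu\in[x'/2,2x']$. The reduction to $|x'|\ge K$ by continuity and the $\Theta_A$ bound are routine and correct.
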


We also need the following  estimate.

\begin{lemma} \label{rgeq1}
There is a constant $c>0$ such that 
$$\int_{1/b}^{\infty}   {V'\over V'^2+(U'-x')^2} dr\geq c x'^{-1+1/\gamma} \quad\text{for}\quad x'\geq 1.$$
\end{lemma}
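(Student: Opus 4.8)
\textit{Proof proposal.} The idea is that the integrand $\dfrac{V'}{V'^2+(U'-x')^2}$ has a ``peak'' of definite width in the variable $r$. By Lemma \ref{UV2} we have $U'=r^\gamma+O(r^{\gamma-1})$ and $V'=\gamma r^{\gamma-1}+O(r^{\gamma-2})$ as $r\to\infty$, so both $dU'/dr$ and $V'$ are comparable to $\gamma r^{\gamma-1}$. Hence, when $U'(r)$ crosses the level $x'$, it stays within distance $V'$ of $x'$ over an $r$-interval whose length is bounded below by an absolute constant, and on that interval the integrand is $\gtrsim 1/V'\simeq x'^{-1+1/\gamma}$. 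Integrating over this interval gives the claimed lower bound. This is the same mechanism as in the proof of the upper bound in Lemma \ref{U'V'}, only simpler, since now we only need one well-chosen interval rather than a global estimate.

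First I would treat large $x'$. Fix a large constant $M>0$ (to be chosen through the asymptotics of Lemma \ref{UV2}) and assume $x'\geq M$. Since $U'$ is continuous on $[1/b,\infty)$ with $U'(r)\to\infty$, the intermediate value theorem yields $r_1$ with $U'(r_1)=x'$, and the asymptotics force $r_1\simeq x'^{1/\gamma}$; for $M$ large this $r_1$ is large, so $r_1-\kappa\geq 1/b$ once $\kappa$ is a fixed small constant. On $I:=[r_1-\kappa,r_1+\kappa]$ the expansions of Lemma \ref{UV2} give, for $M$ large, $|dU'/dr|\leq 2\gamma r_1^{\gamma-1}$ and $\tfrac{\gamma}{2}r_1^{\gamma-1}\leq V'(r)\leq 2\gamma r_1^{\gamma-1}$; therefore $|U'(r)-x'|\leq 2\gamma\kappa\,r_1^{\gamma-1}\leq V'(r)$ on $I$ provided $\kappa\leq 1/(4\gamma)$. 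For such $r$ one gets $$\frac{V'}{V'^2+(U'-x')^2}\geq\frac{1}{2V'}\geq\frac{1}{4\gamma\,r_1^{\gamma-1}}\gtrsim x'^{-1+1/\gamma},$$ and integrating over $I$, whose length $2\kappa$ is a fixed constant, yields $\int_{1/b}^{\infty}\frac{V'}{V'^2+(U'-x')^2}\,dr\gtrsim x'^{-1+1/\gamma}$ for all $x'\geq M$.

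It remains to handle the bounded range $1\leq x'\leq M$. Here I would argue by continuity and compactness: the function $g(x'):=\int_{1/b}^{\infty}\frac{V'}{V'^2+(U'-x')^2}\,dr$ is everywhere positive, and it is continuous on $[1,M]$ by dominated convergence, since near $r=1/b$ the integrand is bounded while for large $r$ one has $U'\geq 2x'$, so the integrand is $\lesssim V'(U')^{-2}\lesssim r^{-\gamma-1}$, which is integrable as $\gamma>1$. Thus $g$ attains a positive minimum $m$ on $[1,M]$, and since $x'^{-1+1/\gamma}\leq 1$ there, $g(x')\geq m\geq m\,x'^{-1+1/\gamma}$. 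Combining the two ranges proves the lemma.

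The main obstacle, though a mild one, is the simultaneous two-sided control required on $I$: one must bound $V'$ both above and below by constant multiples of $r_1^{\gamma-1}$ and, at the same time, keep $|U'-x'|$ below $V'$ throughout $I$. This is exactly what the second-order expansions of Lemma \ref{UV2} deliver, but it couples the choices of the threshold $M$ and the half-length $\kappa$; everything else is routine bookkeeping.
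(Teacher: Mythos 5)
Your proposal is correct and follows essentially the same route as the paper: both restrict the integral to an interval of fixed length near $r\simeq x'^{1/\gamma}$ where Lemma \ref{UV2} gives $V'\simeq x'^{1-1/\gamma}$ and $|U'-x'|\lesssim V'$, so the integrand is $\gtrsim x'^{-1+1/\gamma}$ there, and both dispose of the bounded range of $x'$ separately (the paper simply calls it clear, you spell out the compactness argument). The only cosmetic difference is that the paper uses the interval $[x'^{1/\gamma},x'^{1/\gamma}+1]$ while you center a short interval at the exact crossing point $U'(r_1)=x'$.
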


\begin{proof}
	The lemma is clear when $x'$ is bounded by a constant. So it is enough to consider $x'\geq (1/b)^\gamma$. In this case, we have $x'^{1/\gamma}\geq 1/b$ and the considered integral is larger than the integral for $r$ between $x'^{1/\gamma}$ and  $x'^{1/\gamma}+1$. For those $r$, by using  Lemma \ref{UV2}, we have $V'\simeq x'^{1-1/\gamma}$ and $r^\gamma=x'+O(x'^{1-1/\gamma})$ by mean value theorem. Hence   $|U'-x'|\lesssim V'$. Therefore, we have
	\[\int_{1/b}^{\infty}   {V'\over V'^2+(U'-x')^2} dr\gtrsim\int_{x'^{1/\gamma}}^{x'^{1/\gamma}+1} {V'\over V'^2}dr\simeq \int_{x'^{1/\gamma}}^{x'^{1/\gamma}+1} {1\over x'^{1-1/\gamma}} dr=x'^{-1+1/\gamma}.\]	
The proof of the lemma is finished.
\end{proof}

\medskip
\section{Proof of the main theorem}\label{proof}

Now we complete the proof of Theorem \ref{main}. Firstly, we prove that $T$ is a well-defined positive $\ddc$-closed $(1,1)$-current on $\D^2$.

\begin{proposition}\label{well-define}
$T$ is a positive $(1,1)$-current of finite mass in $\D^2$ supported by $\overline L_1=L_1\cup \{z_1z_2=0\}$.
\end{proposition}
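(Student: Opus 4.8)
The plan is to show three things: (1) $H[\S]$ is a positive current of finite mass on $\S$, and more precisely that $\int_\S H\, \frac{i}{2}d\zeta\wedge d\bar\zeta < \infty$; (2) its pushforward $T=\pi_*(H[\S])$ is a positive $(1,1)$-current of finite mass on $\D^2$; and (3) the support of $T$ is contained in $\overline L_1 = L_1 \cup \{z_1z_2=0\}$. Positivity of $H[\S]$ is immediate since $H=\widetilde H\circ\Phi$ is a positive harmonic function on $\S$ (Poisson integral of a nonnegative function) and $[\S]$ is the current of integration on the two-real-dimensional set $\S$; positivity is preserved under pushforward by the holomorphic immersion $\pi$. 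So the real content is the finite-mass estimate.

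For the mass estimate, I would work in the variables introduced at the end of Section~\ref{pre}. The mass of $T$ in $\D^2$ is comparable to $\int_\S H(\zeta)\, \omega_{\rm eucl}$ where $\omega_{\rm eucl}$ is a smooth positive form on $\C^2$ pulled back by $\pi$; since $\pi(\zeta)=(e^{i\eta\zeta},e^{i\zeta})$, one has $\pi^*\omega_{\rm eucl} \lesssim (|z_1|^2+|z_2|^2)\,\frac{i}{2}d\zeta\wedge d\bar\zeta = (e^{-2\Re(\eta\zeta)}+e^{-2\Im\zeta})\frac{i}{2}d\zeta\wedge d\bar\zeta$, which decays exponentially as $v=\Im\zeta\to\infty$ and along the edge $bu+av\to 0$ of $\S$ (using $bu+av=bsr>0$ on $\S$). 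Slicing $\S$ by the lines $\{v=s\}$ and using $du=s\,dr$, the mass integral becomes roughly $\int_0^\infty\!\!\int_0^\infty H(s(\zeta^*+r))\, e^{-c\,sr}\, s\,dr\,ds$ up to comparable factors. The harmonicity/symmetry of $\widetilde H$ and Lemmas~\ref{UV1}, \ref{UV2}, \ref{U'V'} give the needed control on $H(s(\zeta^*+r)) = \widetilde H(s^\gamma Z')$ via the Poisson representation: split the integral $\widetilde H(U+iV)=\frac1\pi\int_\R \widetilde H(x)\frac{V}{V^2+(U-x)^2}dx$ into $|x'|\le 2\rho$ and $|x'|>2\rho$, apply Lemma~\ref{U'V'} on the far part and the growth estimates of $\widetilde H$ coming from \eqref{2.2}, and integrate. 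The exponential weight $e^{-csr}$ in $s$ guarantees convergence near $s=0$ and $s=\infty$ after one checks that $\widetilde H$ grows at most polynomially.

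To pin down the support, I would argue that $T=\pi_*(H[\S])$ is by construction supported on the closure in $\D^2$ of the image $\pi(\S)=L_1$. Since $\pi$ extends continuously to the boundary edges of $\S$ (where either $z_1$ or $z_2\to 0$) and $\overline{L_1}\subset \D^2$ equals $L_1\cup\{z_1z_2=0\}$ — this is exactly the standard description of the leaf closure recalled at the start of Section~\ref{pre}, and follows because as $bu+av\to 0^+$ with $v$ bounded one has $|z_1|=e^{-(bu+av)}\to 1$ but... more carefully, the accumulation set of $\pi(\zeta)$ as $\zeta\to\partial\S$ lands in $\{z_1=0\}\cup\{z_2=0\}$ — the support is contained in $\overline{L_1}=L_1\cup\{z_1z_2=0\}$. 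I should be slightly careful that "finite mass" plus "supported on a set of locally finite area" is automatic here since $L_1$ and the two axes are (pieces of) analytic curves, so no extra mass-concentration issue arises.

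The main obstacle is the mass bound, specifically controlling the Poisson integral defining $\widetilde H$ uniformly along the slices $\{v=s\}$ as $s$ ranges over $(0,\infty)$: near $s\to 0$ the point $\Phi(\zeta)=s^\gamma Z'$ collapses toward the boundary point $-\rho\cdot s^\gamma\to 0$ of $\H$ (handled by Lemma~\ref{UV1} and the regularity of $\widetilde H$ near the real axis away from its singular growth), while for $s\to\infty$ and $r$ moderate the relevant $x'$ can be large so one must invoke Lemma~\ref{U'V'} together with the precise growth rate of $\int_{x\ge t^\gamma}\widetilde H(x)x^{-1+1/\gamma}dx = A\varepsilon(e^{-t})$ from \eqref{2.2}; balancing these against the exponential decay $e^{-csr}$ coming from $|z_1|^2+|z_2|^2$ is the delicate part, and it is precisely the place where the concavity/monotonicity normalization of $\varepsilon$ (so that $\varepsilon'$ is nonincreasing and $e^{-t}\varepsilon'(e^{-t})$ is well-behaved) is used.
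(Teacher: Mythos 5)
Your overall strategy coincides with the paper's: positivity is immediate, the support statement is by construction, and the substance is the bound $\int_\S H(\zeta)\,\pi^*(\ddc\|z\|^2)<\infty$, attacked by slicing along $\{v=s\}$, writing $\widetilde H$ as a Poisson integral, rescaling to the primed variables, and invoking Lemma \ref{U'V'} together with \eqref{2.2}. However, your reduction of the mass density is incorrect in a way that hides exactly where the work lies. On the slice $\{v=s\}$ the pulled-back form is $\frac1\pi\big((a^2+b^2)e^{-2bsr}+e^{-2s}\big)\,2s\,dr\,ds$ (note it is $\Im(\eta\zeta)=bu+av$, not $\Re(\eta\zeta)$, that enters, and $e^{-2(bu+av)}$ tends to $1$, not $0$, as $bu+av\to 0$). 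This density is \emph{not} comparable to $e^{-csr}\,s\,dr\,ds$: for large $r$ the term $e^{-2s}$ has no decay in $r$ at all, so $e^{-csr}$ badly underestimates the density there, and establishing $\int\!\!\int H\,e^{-csr}s\,dr\,ds<\infty$ would not yield finite mass. Accordingly, your closing claim that ``the exponential weight $e^{-csr}$ guarantees convergence \dots\ after one checks that $\widetilde H$ grows at most polynomially'' leans on a decay in $r$ that is not available, and polynomial growth of $\widetilde H$ is not the relevant condition.

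The correct reduction (the paper's) splits $\S$ into $\S_1=\{bu+av\ge v\}=\{r\ge 1/b\}$, where the density is $\lesssim H e^{-2s}\,s\,dr\,ds$, and $\S_2=\{r\le 1/b\}$, treated symmetrically using $\widetilde H(U+iV)=\widetilde H(-U+iV)$. On $\S_1$ one must then prove that $\int_{1/b}^\infty s\,\widetilde H(U+iV)\,dr$ is bounded \emph{uniformly in $s$ with no help from any $r$-decay}; this is precisely where Lemma \ref{U'V'} (for $|x'|\ge 2\rho$), Lemma \ref{UV2} (for $|x'|<2\rho$) and the scaling identity $s^{1-\gamma}|x'|^{-1+1/\gamma}=|x|^{-1+1/\gamma}$ combine to give the bound $\int_\R\widetilde H(x)\,|x|^{-1+1/\gamma}\,dx$, which is finite by \eqref{2.2}. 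Since you already cite these lemmas and \eqref{2.2} for exactly this purpose, the gap is repairable, but as written the comparability step fails and the convergence mechanism you invoke is the wrong one.
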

\proof
The positivity of $T$ is clear. In order to show that $T$ is a current, it is enough to check for every smooth $(1,1)$-form $\phi$ with compact support in $\D^2$ that $\int_\S H(\zeta) \pi^*(\phi)$ is meaningful. For this purpose, we only need to show that 
$$\int_\S H(\zeta) \pi^*(\ddc\|z\|^2)<+\infty.$$ Indeed, this inequality also shows that $T$   has finite mass. It is clear that $T$ has full mass on $L_1$ and its support is  $\overline L_1.$

By a direct computation, using \eqref{2.1}, we have 
$$\pi^*(idz_1\wedge d\overline z_1)=(a^2+b^2)e^{-2(bu+av)}id\zeta\wedge d\overline\zeta$$ and 
$$\pi^*(idz_2\wedge d\overline z_2)= e^{-2v} id\zeta\wedge d\overline\zeta.$$ Recall that $\ddc=\frac i\pi \partial\dbar$. So we get
\begin{equation}\label{3.0}
\pi^*(\ddc\|z\|^2)=\frac 1 \pi\big((a^2+b^2)e^{-2(bu+av)}+e^{-2v} \big)id\zeta\wedge d\overline\zeta.
\end{equation}
 Note that on the half sector $\S_1:=\{bu+av\geq v\}\cap \S$, we have $e^{-2(bu+av)}\leq e^{-2v}$. Moreover,  the equation $bu+av=v$ is equivalent to $br=1$ in $\S$, which means that the intersection point of  $\{bu+av=s\}$ with $\{v=s\}$ corresponds to $r=1/b$ for any $s$. So using that $id\zeta\wedge d\overline\zeta=2du\wedge dv$ and the  variables $s,r$ introduced in Section \ref{pre}, we get
$$ \int_{\S_1} H(\zeta) \pi^*(\ddc\|z\|^2)\lesssim \int_{\S_1} H(\zeta)e^{-2v}id\zeta\wedge d\overline\zeta=2\int_0^\infty e^{-2s} \Big(\int_{1/b}^\infty s \widetilde H(U+i V) dr\Big) ds.$$
For the integral inside the parentheses, using Poisson formula and the variables $U',V',x'$ defined in Section \ref{pre}, we have
\begin{align*}
\int_{1/b}^\infty s \widetilde H(U+i V) dr&=\int_{1/b}^\infty \frac1\pi\int_\R s \widetilde H(x) {V\over V^2+(U-x)^2} dxdr\\
&=\frac1\pi\int_\R\widetilde H(x)s^{1-\gamma}\Big(\int_{1/b}^\infty   {V'\over V'^2+(U'-x')^2} dr\Big)dx.
\end{align*}

For $|x'|\geq 2\rho$,  by Lemma \ref{U'V'}, we obtain  $$ \int_{1/b}^{\infty}{V'\over V'^2+(U'-x')^2} dr\lesssim |x'|^{-1+1/\gamma}.$$

For $|x'|< 2\rho$, we show that a similar estimate holds. For this purpose, we only need to consider  $r$ large enough. By Lemma \ref{UV2}, we have $|U'-x'|\gtrsim U'$. Thus, $$
\int_{1/b}^{\infty}{V'\over V'^2+(U'-x')^2} dr\lesssim\int_{1/b}^{\infty}{V'\over V'^2+U'^2} dr\lesssim \int_{1/b}^{\infty}{V'\over U'^2} dr
\lesssim\int_{1/b}^{\infty} {1\over r^{\gamma+1}} dr\lesssim |x'|^{-1+1/\gamma}
$$
because $|x'|^{-1+1/\gamma}$ is bounded from below by $(2\rho)^{-1+1/\gamma}$.
Therefore,
$$\int_{1/b}^\infty s\widetilde  H(U+i V) dr\lesssim \int_\R  \widetilde H(x)s^{1-\gamma} |x'|^{-1+1/\gamma}dx =\int_\R  \widetilde H(x) |x|^{-1+1/\gamma}dx.$$ By \eqref{2.2}, the last integral is finite.

Then we deduce that $\int_{\S_1} H(\zeta) \pi^*(\ddc\|z\|^2)<\infty$. We can repeat the argument above for the other half sector $\S_2:=\{bu+av\leq v\}\cap \S$ and get $\int_{\S_2} H(\zeta) \pi^*(\ddc\|z\|^2)<\infty$, which finishes the proof of this proposition. Note that using the symmetric property of $\widetilde H$, we can also see that the integral on $\S_1$ is similar to the one on $\S_2$ and can be treated in the same way.
\endproof

\begin{proposition}\label{ddc}
The current $T$ is $\ddc$-closed.
\end{proposition}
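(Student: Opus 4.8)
The plan is to show that $dd^c T = 0$ by working upstairs on the parameter space $\S$ (or equivalently on $\H$ after transporting via $\Phi$) and reducing the claim to the harmonicity of $\widetilde H$. Concretely, for a test function $\chi \in C^\infty_c(\D^2)$ we have $\langle dd^c T,\chi\rangle = \langle T, dd^c\chi\rangle = \int_\S H(\zeta)\,\pi^*(dd^c\chi)$, and since $\pi$ is a holomorphic immersion, $\pi^*(dd^c\chi) = dd^c(\chi\circ\pi)$ on $\S$. So the whole matter comes down to checking that
$$\int_\S H(\zeta)\,dd^c\psi = 0 \quad\text{for all } \psi\in C^\infty_c(L_1),\ \psi = \chi\circ\pi,$$
i.e.\ that $H$ is harmonic on $\S$ in the distributional sense tested against forms pulled back from $\D^2$. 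Here one must be slightly careful: $\psi = \chi\circ\pi$ need not have compact support \emph{in $\S$}, because the boundary edges $\{v=0\}$ and $\{bu+av=0\}$ of $\S$ map into the separatrices $\{z_2=0\}$ and $\{z_1=0\}$, which can meet the support of $\chi$. The key geometric point is that $\pi$ sends a neighborhood of each edge of $\S$ (in $\zeta$) to a neighborhood of a separatrix, and $\chi\circ\pi$ together with all its derivatives decays rapidly as $v\to\infty$ or as $bu+av\to\infty$ (since $|z_1|, |z_2|$ are exponentially small there), so the only genuine boundary contributions come from the finite edges near $\zeta=0$, where $z_1, z_2 \to 0$ simultaneously.

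The steps, in order: (1) Since $H = \widetilde H\circ\Phi$ with $\Phi$ biholomorphic and $\widetilde H$ harmonic on $\H$ by construction (it is the Poisson extension of a function on $\R$), $H$ is harmonic on the open sector $\S$; thus $\int_\S H\,dd^c\psi = 0$ for any $\psi\in C^\infty_c(\S)$. (2) Use a cutoff argument: choose $\theta_\epsilon$ supported away from $\partial\S$, equal to $1$ outside an $\epsilon$-collar of the edges, with controlled derivatives; write $\int_\S H\,dd^c\psi = \int_\S H\,dd^c(\theta_\epsilon\psi) + \int_\S H\,dd^c((1-\theta_\epsilon)\psi)$. The first term vanishes by (1). (3) Show the second term tends to $0$ as $\epsilon\to 0$. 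This splits into the behavior near the two finite edges emanating from $0$ and the behavior at the "ends" of $\S$ ($v\to\infty$, $bu+av\to\infty$). For the ends, rapid exponential decay of $\chi\circ\pi$ and its derivatives kills everything, since $H$ grows at most polynomially (this is exactly the type of bound underlying Lemmas \ref{U'V'}--\ref{rgeq1}, showing $\widetilde H$ and hence $H$ has controlled growth). For the two finite edges at $0$, one needs that $\int_{\text{collar}_\epsilon} H\,|dd^c\psi|$ and the cross terms $\int H\,d\theta_\epsilon\wedge d^c\psi$ etc.\ vanish in the limit; this uses the mass finiteness established in Proposition \ref{well-define} (so $H\,\pi^*(dd^c\|z\|^2)$ is integrable near the edges) together with the fact that $\psi$ and its first derivatives are bounded near the edges, so a dominated-convergence / Stokes-type argument applies.

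The main obstacle I expect is step (3) at the two finite edges meeting at $\zeta = 0$: near there both separatrices are approached simultaneously, $\Phi$ has a branch-point type singularity (the map $\zeta\mapsto\zeta^\gamma$ with $\gamma>1$), and one must confirm that $\widetilde H$ does not blow up too fast as $Z\to 0$ in $\H$ and that the boundary terms in the integration by parts genuinely vanish rather than producing a residual current supported on $\{z_1 z_2 = 0\}$. The clean way to handle this is to exhaust $\S$ by subdomains $\S_\epsilon$ with smooth boundary lying in the interior, apply Stokes/Green on $\S_\epsilon$ to get $\int_{\S_\epsilon} H\,dd^c\psi = \int_{\partial\S_\epsilon}(H\,d^c\psi - \psi\,d^c H)$ (using $dd^c H = 0$ on $\S_\epsilon$), and then show each boundary integral $\to 0$: near the ends by exponential decay of $\psi$, and near $\zeta = 0$ by the integrability estimates from Proposition \ref{well-define} controlling $\int |d^c H|$ on the shrinking boundary pieces, noting that $\psi$ is bounded and $H\,d^c\psi$ contributes a vanishing amount because the boundary length shrinks while $H$ stays integrable. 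Once all boundary contributions vanish, $\langle dd^c T, \chi\rangle = 0$ for all $\chi$, proving $T$ is $dd^c$-closed.
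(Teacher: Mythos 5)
Your overall strategy (harmonicity of $H$ plus a Stokes/Green argument on an exhaustion of $\S$, with all boundary contributions shown to vanish) is the same as the paper's, but your geometric picture of where the boundary terms live is inverted, and that is precisely where the proof actually happens. Since $|z_1|=e^{-(bu+av)}$ and $|z_2|=e^{-v}$, the edges $\{v=0\}$ and $\{bu+av=0\}$ of $\S$ are sent to $\{|z_2|=1\}$ and $\{|z_1|=1\}$, and the corner $\zeta=0$ is sent to $(1,1)$ --- not to the separatrices and not to the origin. Hence for $\phi\in C^\infty_c(\D^2)$ the pullback $\pi^*(\phi)$ is supported in $\S^\star=\{v>\lambda,\ bu+av>\lambda\}$ and there is nothing to do near the finite edges or the corner: that part of your step (3), including the worry about the branch point of $\Phi$ at $\zeta=0$, is vacuous. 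The separatrices and the singular point are reached as $\zeta\to\infty$ in $\S$, so the only genuine boundary contributions come from the far edges $E_s\subset\{v=s\}$ and $E_s'\subset\{bu+av=s\}$ of the exhausting parallelograms $Q_s$ --- exactly the terms you propose to dismiss by ``rapid exponential decay of $\chi\circ\pi$ and its derivatives.''

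That dismissal is the gap. On $E_s$ the function $\psi=\chi\circ\pi$ does not decay at all (it tends to $\chi(0)$, typically equal to $1$ near the singularity), and $\pi^*(\dc\chi)$ only carries a factor $e^{-(bu+av)}=e^{-bsr}$, which against the measure $s\,dr$ on $E_s$ gives a quantity of order $1$, not $o(1)$, if one merely uses boundedness of $H$. One genuinely needs (i) the decay of $H$ along $E_s$ as $s\to\infty$, extracted from the Poisson representation of $\widetilde H$ together with Lemmas \ref{UV1} and \ref{U'V'} and the integrability \eqref{2.2} of $\widetilde H(x)|x|^{-1+1/\gamma}$ (this is the estimate \eqref{3.2} in the paper), and (ii) for the term $\int_{E_s}\psi\,\dc H$, a gradient bound $|dH|\lesssim (sr)^{-1}H$ obtained from Harnack's inequality on discs of radius comparable to $sr$ contained in $\S$, followed by the same Poisson-kernel computation. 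Neither ingredient appears in your plan, and ``$H$ grows at most polynomially'' combined with exponential decay of the test data is not a substitute: the convergence to $0$ ultimately comes from dominated convergence applied to $\widetilde H(x)|x|^{-1+1/\gamma}$, not from a pointwise exponential gain.
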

\proof
Let $Q_s$ be the parallelogram in $\S$ limited  by $bu+av=s$ and $v=s$. We have 
$$T=\lim_{s\to\infty} T_s \quad \text{with}\quad T_s:=\pi_*(H[Q_s]).$$
For a smooth function $\phi$ with compact support in $\D^2$, we need to show that 
$\langle T_s, \ddc\phi\rangle\to 0$ as $s$ tends to infinity.

 Since $\phi$ is compactly supported in $\D^2$, there exists a positive constant $\lambda$ such that the support of $\pi^*(\phi)$ is contained in the sector $\S^\star:=\{v>\lambda, bu+av>\lambda\}$ which is contained in $\S$. Define $Q^\star:=Q_s\cap S^\star$.

By Stokes formula, we have
\begin{equation}\label{3.1}
\langle T_s, \ddc\phi \rangle =    \langle H\, d[Q_s^\star], \pi^*(d^c\phi)\rangle+ \langle dH\wedge [Q_s^\star], \pi^*(d^c\phi)\rangle.
\end{equation}

We show that the first term in \eqref{3.1} tends $0$. Observe that $d[Q_s^\star]=[E_s]+[E_s']$, where $E_s\subset \{v=s\}$ is the horizontal edge and $E_s'\subset \{bu+av=s\}$ is the vertical edge of $Q_s^\star$ inside $\S^\star$  with suitable orientations. We will only show that $\langle H[E_s], \pi^*(\dc\phi)\rangle$ tends to $0$. A similar property for $[E_s']$ can be obtained in the same way.

  Note that $\dc\phi$ is a combination with bounded coefficients of $dz_i$ and $d\overline z_i$. We also have
$$\pi^*(dz_1)=i\eta e^{i\eta\zeta} d\zeta,\quad \pi^*(d\overline z_1)=-i\overline\eta e^{-i\overline{\eta\zeta}}d\overline\zeta$$
and
$$\pi^*(dz_2)=ie^{i\zeta}d\zeta,\quad \pi^*(d\overline z_2)=-ie^{-i\overline \zeta}d\overline\zeta.$$
Observe that 
$$\abs{e^{i\eta\zeta}}=\abs{e^{-i\overline{\eta\zeta}}}=e^{-(bu+av)}\quad\text{and}\quad \abs{e^{i\zeta}}=\abs{e^{-i\overline\zeta}}=e^{-v}.$$ Since we are working with $\zeta=u+iv\in E_s$, we have that $e^{-v}\leq e^{-(bu+av)}$ and $d\zeta=d\overline\zeta=du$. 
So $\pi^*(\dc \phi)$ is equal to $e^{-(bu+av)}du$ times a bounded function.  We only have to check that $$\int_{E_s}H(\zeta)e^{-(bu+av)}du \to 0.$$
 Observe that $E_s=\{\lambda<bu+av\leq s,v=s\}$. Moreover, we have $bu+av=bsr$ on the half-line $\{v=s\}\cap \S$. Hence the considered integral is equal to \begingroup
 \allowdisplaybreaks
\begin{align}
&\int_{\lambda<bu+as\leq s}H(\zeta)e^{-(bu+as)} du=\int_{\lambda/(bs)}^{1/b}\widetilde H(U+iV)e^{-bsr}s dr \nonumber\\
&=\frac 1\pi\int_\R \widetilde H(x) \Big(\int_{\lambda/(bs)}^{1/b} s e^{-bsr} {V\over V^2+(U-x)^2} dr\Big) dx \nonumber\\
&=\frac 1\pi\int_\R \widetilde H(x) \Big(\int_{\lambda/(bs)}^{1/b} s^{1-\gamma}e^{-bsr}  {V'\over V'^2+(U'-x')^2} dr\Big) dx\nonumber\\
&\lesssim \int_\R \widetilde H(x) s^{1-\gamma}\Big(\int_{\lambda/(bs)}^{1/b} (bsr)^{-1}  {V'\over V'^2+(U'-x')^2} dr\Big) dx.  \label{3.2}
\end{align}\endgroup

We need to show that the expression in \eqref{3.2} tends to $0$. For this purpose, we will split the integral into two parts corresponding to $|x'|\geq 2\rho$ and $|x'|<2\rho$.

For $|x'|\geq 2\rho$, by Lemma \ref{U'V'} and using that $bsr>\lambda$ on $E_s$, we have 
\begingroup
\allowdisplaybreaks
\begin{align*}&\int_{|x'|\geq 2\rho} \widetilde H(x) s^{1-\gamma}  \Big(\int_{\lambda/(bs)}^{1/b} (bsr)^{-1} {V'\over V'^2+(U'-x')^2} dr\Big) dx \\
&\lesssim \int_{|x'|\geq 2\rho}  \widetilde H(x)s^{1-\gamma}|x'|^{-1+1/\gamma} dx
=  \int_{|x|\geq 2\rho s^\gamma} \widetilde H(x)|x|^{-1+1/\gamma}dx.
\end{align*}\endgroup
Since $\widetilde H(x)|x|^{-1+1/\gamma}$ is integrable on $\R$ (see \eqref{2.2}) and $2\rho s^\gamma\to \infty$, the last integral tends to $0$.

For $|x'|< 2\rho$,   when $s\to \infty$, we have $\lambda/(bs)\to 0$. Using Lemma \ref{UV1}, it is enough to estimate
\begingroup
\allowdisplaybreaks
\begin{align}
&\int_{|x'|< 2\rho} \widetilde H(x)s^{1-\gamma}\Big(\int_{\lambda/(bs)}^{1/b}   (bsr)^{-1} { r\over r^2+(\rho+x')^2} dr\Big) dx \nonumber\\
&\lesssim \int_{|x'|< 2\rho} \widetilde H(x)s^{1-\gamma}\Big(\int_{\lambda/(bs)}^{1/b} (bsr)^{-1} { r\over r^2} dr\Big) dx \nonumber \\
&\simeq \int_{|x|<2\rho s^\gamma}  \widetilde H(x)  s^{1-\gamma}  dx= \int_{|x|< 2\rho s^\gamma}  \widetilde H(x) |x|^{-1+1/\gamma} \Big({|x|\over s^\gamma}\Big)^{1-1/\gamma}dx.   \nonumber                   
\end{align}\endgroup
Note that the function inside the last integral converges pointwise to $0$. Therefore, by Lebesgue dominated convergence theorem, the last integral goes to $0$. Thus, the first term in \eqref{3.1} tends to $0$.

\vskip 5pt

Now we show that the second term  in \eqref{3.1} tends to $0$. Since $d=\partial+\dbar, \dc=\frac {i}{2\pi} (\partial-\dbar)$ and $H$ is harmonic, by Stokes formula, this term is equal to  
$$-\langle d H\wedge d^c[Q_s^\star], \pi^*(\phi)\rangle=\frac i{2\pi} \langle \partial H\wedge d[Q_s^\star], \pi^*(\phi)\rangle -\frac i{2\pi}
\langle \dbar H\wedge d[Q_s^\star], \pi^*(\phi)\rangle.$$

Recall that $d[Q_s^\star]=[E_s]+[E_s']$. We only consider the integral on the horizontal edge $E_s$ for simplicity. Fix a point $p$ on $E_s$, and denote by $d_{p,1}$ the distance from $p$ to the line $\{bu+av=0\}$ and $d_{p,2}$ the distance from $p$ to the line $\{v=0\}$. It is not hard to see that $d_{p,1}\simeq sr$ and $d_{p,2}=s$. Denote by $d_p:=\min(d_{p,1},d_{p,2})$.  Then $H$ is harmonic on the open disc $\D(p,d_p)$ of center $p$ and radius $d_p$, by Harnack's inequality, for $x\in\D(p,d_p)$ and $d_x:=\dist(x,p)$ we have
$${d_p-d_x\over d_p+d_x}H(p)\leq H(x)\leq {d_p+d_x\over d_p-d_x}H(p).$$
Using that $\lambda/(bs)<r\leq 1/b$ on $E_s$, we get that $d_p\simeq sr$ on $E_s$.
 Then by definition of derivative and taking $x\to p$, we deduce that at the point $p$,  $\partial H$ is equal to $(sr)^{-1}Hd\zeta$ times a bounded number and $\dbar H$ is equal to $(sr)^{-1}Hd\overline\zeta$ times a bounded number. Moreover, for $\zeta=u+iv \in E_s$, we have $d\zeta=d\overline \zeta =du$.
Therefore, it is enough to show that
 $$\int_{E_s}H(\zeta)(sr)^{-1}du\to 0.$$ Using the variables $U',V'$ and $x'$, we get
\begingroup
 \allowdisplaybreaks
 \begin{align*}
 &\int_{E_s}H(\zeta)(sr)^{-1}du=\int_{\lambda/(bs)}^{1/b}\widetilde H(U+iV)(sr)^{-1}sdr\\
 &=\frac 1\pi\int_\R \widetilde H(x) s^{1-\gamma}\Big(\int_{\lambda/(bs)}^{1/b} (sr)^{-1}  {V'\over V'^2+(U'-x')^2} dr\Big) dx.
 \end{align*}\endgroup
As for \eqref{3.2}, we see that the last integral tends to $0$ as $s$ tends to infinity. This ends the proof of the proposition.
\endproof

\begin{proposition}\label{geqdelta}
	For $0<\delta<1$, we have $\|T\|_{\delta\D^2}\geq \varepsilon(\delta)\delta^2$.
\end{proposition}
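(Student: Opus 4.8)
The plan is to estimate from below the mass of $T$ in the bidisc $\delta\D^2$ by pulling back via $\pi$ to the sector $\S$, restricting attention to a carefully chosen region of $\S$ that maps into $\delta\D^2$, and then using the lower bound for the Poisson integral from Lemma~\ref{rgeq1} together with the normalization~\eqref{2.2}. Concretely, writing $\delta=e^{-t}$ for $t\ge 0$, I would look at the part of the leaf $L_1$ parametrized by $\zeta=u+iv\in\S$ with $v=s$ for $s$ in a range where $e^{-v}=e^{-s}\le\delta$, i.e.\ $s\ge t$, and with $r=u/s-u^*$ staying in a fixed compact range bounded away from $0$, say $r\ge 1/b$; on such points both $|z_2|=e^{-v}\le\delta$ and $|z_1|=e^{-(bu+av)}=e^{-bsr}\le e^{-s}\le\delta$ hold, so $\pi(\zeta)\in\delta\D^2$.

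**Next I would** lower-bound $\|T\|_{\delta\D^2}=\langle T,\ddc\|z\|^2\rangle_{\delta\D^2}$ (up to the usual comparability of the mass with the trace measure) by
$$\|T\|_{\delta\D^2}\gtrsim \int_{\{v\ge t,\ r\ge 1/b\}} H(\zeta)\,\pi^*(\ddc\|z\|^2)\gtrsim \int_t^\infty e^{-2s}\Big(\int_{1/b}^\infty s\,\widetilde H(U+iV)\,dr\Big)\,ds,$$
using \eqref{3.0}, the bound $e^{-2(bu+av)}+e^{-2v}\gtrsim e^{-2v}=e^{-2s}$ on this region, and $id\zeta\wedge d\overline\zeta=2\,du\wedge dv$ with $du=s\,dr$. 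Then I expand $\widetilde H(U+iV)$ by the Poisson formula and pass to the rescaled variables $U',V',x'$ as in Section~\ref{pre}, so the inner integral becomes
$$\int_{1/b}^\infty s\,\widetilde H(U+iV)\,dr=\frac1\pi\int_\R \widetilde H(x)\,s^{1-\gamma}\Big(\int_{1/b}^\infty \frac{V'}{V'^2+(U'-x')^2}\,dr\Big)\,dx.$$

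**Here Lemma~\ref{rgeq1} enters**: for $x'\ge 1$ the inner $r$-integral is $\gtrsim x'^{-1+1/\gamma}$, hence for $x\ge s^\gamma$ we get $s^{1-\gamma}\cdot(s^{-\gamma}x)^{-1+1/\gamma}=x^{-1+1/\gamma}$ after rescaling. Restricting the $x$-integration to $x\ge s^\gamma$ (using $\widetilde H\ge 0$ to throw away the rest), I obtain
$$\int_{1/b}^\infty s\,\widetilde H(U+iV)\,dr\gtrsim \int_{x\ge s^\gamma}\widetilde H(x)\,x^{-1+1/\gamma}\,dx = A\,\varepsilon(e^{-s})$$
by the identity~\eqref{2.2}. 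Plugging back,
$$\|T\|_{\delta\D^2}\gtrsim A\int_t^\infty e^{-2s}\varepsilon(e^{-s})\,ds.$$
Since $\varepsilon$ may be taken increasing and, after the reduction in Section~\ref{pre}, we have enough freedom (the constant $A$ was fixed large precisely for this), I would estimate $\int_t^\infty e^{-2s}\varepsilon(e^{-s})\,ds\ge \varepsilon(e^{-t})\int_t^\infty e^{-2s}\,ds=\tfrac12\varepsilon(e^{-t})e^{-2t}=\tfrac12\varepsilon(\delta)\delta^2$ — wait, $\varepsilon$ increasing gives $\varepsilon(e^{-s})\le\varepsilon(e^{-t})$ for $s\ge t$, the wrong direction, so instead I keep only $s\in[t,t+1]$ where $\varepsilon(e^{-s})\ge\varepsilon(e^{-t-1})\gtrsim\varepsilon(e^{-t})$ by concavity/continuity (comparing $\varepsilon$ at $e^{-t}$ and $e^{-t-1}$ via the concavity reduction), yielding $\|T\|_{\delta\D^2}\gtrsim A\varepsilon(\delta)\delta^2$, and then choosing $A$ large absorbs the implicit constant so that the clean inequality $\|T\|_{\delta\D^2}\ge\varepsilon(\delta)\delta^2$ holds.

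**The main obstacle** I anticipate is bookkeeping the constants correctly — making sure that the region $\{v\ge t,\ r\ge 1/b\}$ genuinely maps into $\delta\D^2$ (both coordinates), that the comparability between $\|T\|_{\delta\D^2}$ and the integral of $H$ against $\pi^*(\ddc\|z\|^2)$ over this region only loses a fixed multiplicative constant, and, most delicately, that after the step $s\in[t,t+1]$ the ratio $\varepsilon(e^{-t-1})/\varepsilon(e^{-t})$ is bounded below by a constant independent of $t$; this is where the reduction to a concave $\varepsilon$ (so that $\varepsilon(2x)\le 2\varepsilon(x)$, equivalently $\varepsilon(x)\le\varepsilon(ex)\le e\,\varepsilon(x)$) is used, and where the freedom to enlarge $A$ cleans up the final constant. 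Everything else is a routine rescaling computation of the type already carried out in the proof of Proposition~\ref{well-define}.
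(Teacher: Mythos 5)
Your proposal is correct and follows essentially the same route as the paper's proof: restrict to the half-sector $\S_1\cap\{v>t\}$, keep only the $e^{-2v}$ term of $\pi^*(\ddc\|z\|^2)$, rescale and apply Lemma~\ref{rgeq1} on $\{x\geq s^\gamma\}$ together with \eqref{2.2}, then localize to $s\in[t,t+O(1)]$ and use concavity of $\varepsilon$ and the large constant $A$ to clean up. The only cosmetic difference is that the paper restricts the $x$-integration uniformly to $x\geq(t+\log 2)^\gamma$ rather than evaluating the inner integral as $A\varepsilon(e^{-s})$ for each $s$; both are valid.
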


\proof
Recall that $\pi(\zeta):=(e^{i\eta\zeta}, e^{i\zeta})$. So $$\pi^{-1}(\delta\D^2)=\{u+iv:\,\,bu+av>-\log\delta,v>-\log\delta\}.$$
We define $t:=-\log\delta$.
Hence $$\|T\|_{\delta\D^2}\simeq \int_{bu+av>t,v>t}  H(\zeta) \pi^*(\ddc\|z\|^2).$$

To prove the proposition, it suffices to bound  the integral on $\S_1\cap\{bu+av>t,v>t\}$ from below. Using \eqref{3.0} and the variables $r$ and $s$ introduced in Section \ref{pre}, the considered integral is equal to a constant times
\begingroup
\allowdisplaybreaks
\begin{align*}
&\int_{v>t,bu+av\geq v}  H(\zeta)\big((a^2+b^2)e^{-2(bu+av)}+e^{-2v} \big)id\zeta\wedge d\overline\zeta\\
&\geq \int_{v>t,bu+av\geq v} H(\zeta)e^{-2v}id\zeta\wedge d\overline\zeta\\
&\simeq \int_t^\infty e^{-2s} \int_{1/b}^\infty \int_\R \widetilde H(x) s{V\over V^2+(U-x)^2}  dxdrds\\
&=\int_{t}^{\infty}e^{-2s}\int_\R \widetilde H(x)s^{1-\gamma}\Big(\int_{1/b}^{\infty}   {V'\over V'^2+(U'-x')^2} dr\Big)dxds\\
&\geq \int_{t}^{\infty}e^{-2s}\int_{x\geq s^\gamma} \widetilde H(x)s^{1-\gamma}\Big(\int_{1/b}^{\infty}   {V'\over V'^2+(U'-x')^2} dr\Big)dxds.
\end{align*}\endgroup
When $x\geq s^\gamma$, we have $x'\geq 1$.  Applying Lemma \ref{rgeq1} to the integral inside the parentheses, and using \eqref{2.2}, we obtain \begingroup
\allowdisplaybreaks
\begin{align*}
&\|T\|_{\delta\D^2}\gtrsim \int_{t}^{\infty}e^{-2s}\int_{x\geq s^\gamma}\widetilde H(x)s^{1-\gamma}x'^{-1+1/\gamma} dxds
=\int_{t}^{\infty}e^{-2s}\int_{x\geq s^\gamma}\widetilde H(x)x^{-1+1/\gamma} dxds\\
&\geq \int_{t}^{t+\log 2}e^{-2s}\int_{x\geq s^\gamma}\widetilde H(x)x^{-1+1/\gamma} dxds 
\geq \int_{t}^{t+\log 2}e^{-2s}\int_{x\geq (t+\log 2)^\gamma}\widetilde H(x)x^{-1+1/\gamma} dxds\\
&= \int_{t}^{t+\log 2}e^{-2s} A\varepsilon(e^{-t-\log 2}) ds
\geq \log 2\cdot e^{-2t-2\log 2}A\varepsilon(e^{-t-\log 2})
\simeq A\delta ^2\varepsilon(\frac 12 \delta)\geq \frac A2 \delta^2\varepsilon(\delta)
\end{align*}\endgroup
since $\varepsilon$ is concave and $\varepsilon(0)=0$. We get the desired estimate by taking $A$ large enough. The proof is complete.
\endproof

Summing up, Propositions \ref{well-define}, \ref{ddc} and \ref{geqdelta} show that the current $T$ defined in Section \ref{pre} is a positive  $\ddc$-closed current with the desired mass estimate. By definition, it is clear that $T$ has no mass on the separatrices at $0$ because $L_1\subset (\D^*)^2$. The proof of Theorem \ref{main} is finished. 

\begin{remark}\rm
	For every $\alpha\in \mathbb A$, we can construct a similar current $T_\alpha$ supported by $\overline L_\alpha$. By taking an average of those currents, we can have a current $T$ satisfying Theorem \ref{main} which is given by a smooth form on $(\D^*)^2$.
\end{remark}

\begin{remark}\rm
	Consider a general positive $\ddc$-closed current $T$ on $\D^2$ directed by $\mathscr F$. Assume that $T$ has no mass on the separatrices at $0$. It is known that $T$ can be written as an average of currents $T_\alpha$ supported by $\overline L_\alpha$ by mean of a positive measure on $\mathbb A$. As above, we can show that $T_\alpha$ is positive $\ddc$-closed for almost every $\alpha$.
	\end{remark}

\begin{remark}\rm
	The famous Siu's semicontinuity theorem \cite{siu} says that for every positive closed current $S$ on a complex manifold $X$, the upperlevel set $E_c:=\{\nu(S,\cdot)>c\}$ is an analytic subset of $X$ for every $c>0$. It is well-known that this property is not true for positive $\ddc$-closed current. For the current $T$ we constructed, we have $$\{\nu(T,\cdot)>c\}=\{\pi(z):\,\,H(z)>c\},$$ which is an open set of $L_1$ and is Zariski dense in $\D^2$.
\end{remark}

\medskip


\end{document}